\theoremstyle{plain}
\newtheorem{theo}{Theorem}
\newtheorem{prop}[theo]{Proposition}
\theoremstyle{definition}
\newtheorem{defi}[theo]{Definition}
\theoremstyle{remark}
\newtheorem{rema}[theo]{Remark}
\newtheorem{exem}{Example}
\numberwithin{equation}{section}
\numberwithin{theo}{section}
\newcommand{\interior}[1]{%
  {\kern0pt#1}^{\mathrm{o}}%
}
\begin{document}

\title[]{Chainmail links, Dehn surgery number and $10/8$}
\date{\today}

\author{Soheil Azarpendar}
\address{Mathematical Institute, University of Oxford, Andrew Wiles Building,
		Radcliffe Observatory Quarter, Woodstock Road, Oxford, OX2 6GG, UK}
\email{azarpendar@maths.ox.ac.uk}
\keywords{}

\begin{abstract}
   Liu and Piccirillo \cite{liu2024bounding} developed a combinatorial argument that employs Furuta’s 10/8-theorem to demonstrate that certain manifolds cannot be obtained via Dehn surgery on a knot. We extend their approach by creating additional examples using chainmail links.
\end{abstract}

\maketitle
\tableofcontents

\section{Introduction}\label{Section:Intro}
The celebrated Lickorish-Wallace theorem asserts that every closed, oriented 3-manifold can be obtained by performing Dehn surgery on a link in the three-sphere. The \emph{Dehn surgery number} of a 3-manifold $M$ is defined as the minimal number of components of a link in $S^3$ that admits a Dehn surgery homeomorphic to $M$. As with other invariants defined as a minimum, the Dehn surgery number is generally difficult to compute.\\ 

Classical lower bounds for the Dehn surgery number are derived from the fundamental group and homology (see Remark \ref{Remark:weight}). Accordingly, we focus on 3-manifolds that are irreducible, closed, oriented, and have the integral homology of some lens space $L(r, 1)$ for $r \in \mathbb{Z}$. In the rest of this note, we will assume all 3-manifolds have the aforementioned properties.\\

Various techniques have been employed to identify such 3-manifolds with Dehn surgery number of at least two. These include gauge theory \cite{LinesBoyer+1990+181+220,auckly1997surgery}, Heegaard Floer homology \cite{hom2016surgery,hom2018note,McCoy2014NonintegerSA}, and the $SU(2)$ character variety of the fundamental group \cite{sivek2022surgery}. To date there are no examples in the literature of such 3-manifolds with surgery number
greater than two. Daemi and Miller-Eismeier announced examples in 2022, and their preprint is forthcoming.\\

Recently, Liu and Piccirillo \cite{liu2024bounding} proposed a new method for producing examples of 3-manifolds with Dehn surgery numbers greater than one. Their method uses Kaplan's characteristic sublink calculus and Furuta's 10/8-theorem. They provide one specific family of examples for their method, presented in Theorem \ref{Theorem:LiuPiccirilomain}.\\ 

\begin{theo}\label{Theorem:LiuPiccirilomain}\cite{liu2024bounding}
Let $L_n$ be the 2-component link shown in Figure \ref{LiuPicciriolLink}. For any odd integers $p, q$ and odd integer $n$ sufficiently large, the surgery manifold $S^3_{p,q}(L_n)$ has Dehn surgery number two.
\end{theo}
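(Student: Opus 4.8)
The plan is to prove the two bounds on the surgery number separately. The upper bound is automatic: by construction $M := S^3_{p,q}(L_n)$ is obtained by Dehn surgery on the two-component link $L_n$, so its surgery number is at most two. Moreover $M$ has the integral homology of $L(r,1)$ with $r \neq \pm 1$, so $M \not\cong S^3$ and the surgery number is at least one. The entire content of the theorem is therefore the statement that $M$ is \emph{not} the result of any Dehn surgery on a knot; granting this, the surgery number is exactly two.

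To obstruct a knot-surgery description I would argue by contradiction, following the $10/8$ scheme. Suppose $M \cong S^3_{a/b}(K)$ for some knot $K \subset S^3$ and some slope $a/b$. Expanding $a/b$ by its Hirzebruch--Jung continued fraction and capping the $2$-handle along $K$ by the associated linear plumbing produces a smooth $4$-manifold $W_K$ with $\partial W_K = M$ whose intersection form is, up to the sign of the single $K$-handle, definite; in particular the \emph{excess} $b_2(W_K) - |\sigma(W_K)|$ is bounded by an absolute constant, \emph{uniformly in $K$ and in the slope}. The homological hypothesis — that $M$ carries the linking form of $L(r,1)$ — is used here to constrain the admissible slopes $a/b$. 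This uniform control over the unknown knot's contribution is exactly what will make the obstruction insensitive to $K$.

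The heart of the argument is to build, on the other side, a \emph{spin} $4$-manifold $V_n$ with $\partial V_n = -M$ (or $\partial V_n = M$; one runs both orientations to cover both signs of the slope) whose intersection form is even, negative definite up to bounded excess, and contains an $E_8$-type block whose rank grows linearly in $n$, so that $|\sigma(V_n)| \to \infty$ while $b_2(V_n) - |\sigma(V_n)|$ stays bounded. I would construct $V_n$ by Kirby calculus directly on the surgery diagram of $L_n$: the chainmail pattern, with its $n$-dependent twisting, is what lets one slide and cancel handles to expose the repeated $E_8$ blocks. The parity hypotheses that $p$, $q$, and $n$ are all odd enter precisely here: via Kaplan's characteristic sublink calculus they certify that the relevant characteristic sublink can be emptied, hence that $V_n$ (and the closed manifold assembled from it) has even intersection form and is spin. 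Gluing $V_n$ to $W_K$ along $M$ with matching orientations yields a closed, smooth, spin $4$-manifold $Z_n$, and by Novikov additivity $b_2(Z_n) - |\sigma(Z_n)| = \big(b_2(V_n) - |\sigma(V_n)|\big) + \big(b_2(W_K) - |\sigma(W_K)|\big)$ remains bounded while $|\sigma(Z_n)| \to \infty$.

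The contradiction is then delivered by Furuta's $10/8$-theorem: a closed smooth spin $4$-manifold with indefinite form satisfies $b_2 \ge \tfrac{10}{8}|\sigma| + 2$, i.e.\ $b_2 - |\sigma| \ge \tfrac14|\sigma| + 2$, while in the definite case Donaldson's theorem forbids an $E_8$ summand outright. Since $Z_n$ has bounded excess but unbounded $|\sigma(Z_n)|$, for $n$ sufficiently large the inequality fails, contradicting the assumption $M = S^3_{a/b}(K)$. I expect the main obstacle to be the construction of $V_n$ together with the verification of spinness: one must carry out the handle calculus on $L_n$ explicitly enough to read off that the form is even with an $E_8$-block of rank $\Theta(n)$ and bounded excess, and then apply Kaplan's characteristic sublink calculus — this is exactly where the oddness of $p$, $q$, $n$ is indispensable, since the whole obstruction evaporates if $Z_n$ fails to be spin. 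Secondary care is needed in the orientation and sign bookkeeping, so that the signatures of $V_n$ and $W_K$ reinforce rather than cancel, and in pinning down the admissible slopes from the linking form.
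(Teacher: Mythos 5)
This statement is quoted from Liu--Piccirillo and is not proved in the paper; the closest internal analogue of its argument is Proposition \ref{Proposition:strategy}, so I will compare your proposal against that. Your overall scheme --- assume $M=S^3_{a/b}(K)$, build a spin filling on the chainmail side with $|\sigma|\to\infty$ and bounded excess $b_2-|\sigma|$ via Kaplan's characteristic sublink calculus, glue to a filling coming from the hypothetical knot surgery, and contradict Furuta's $10/8$-inequality --- is exactly the right one, and your treatment of the chainmail side matches the paper's (Proposition \ref{Proposition:compute} plays the role of your ``$E_8$-blocks of rank $\Theta(n)$ with bounded excess'').

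The genuine gap is on the knot side. You cap the $2$-handle along $K$ with the linear plumbing associated to the Hirzebruch--Jung expansion of $a/b$ and then assert that gluing this $W_K$ to $V_n$ yields a closed \emph{spin} manifold $Z_n$. But $W_K$ is generically not spin: the continued-fraction framings are generically odd, so the intersection form of the linear chain is odd, and no choice of spin structure on $M$ extends over it. Since spinness of the closed manifold is the entire input to Furuta's theorem, the argument collapses at the gluing step; controlling $b_2(W_K)-|\sigma(W_K)|$ is not enough. The paper (following Liu--Piccirillo) avoids this by the cabling trick: $S^3_{s/t}(K)\,\#\,L(t,s)=S^3_{st}(T_{t,s}(K))$, where $s=|H_1(M)|$ is forced to be \emph{even} by hypothesis (this, not a constraint on ``admissible slopes,'' is how the homological assumption enters). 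The trace $X_{st}(T_{t,s}(K))$ of the even integral surgery is then spin, and the extra lens space summand $L(t,s)$ is capped by a spin filling with $b_2,|\sigma|\leq|s|$, which is bounded. You would either need to adopt this cabling detour or replace your $W_K$ by a filling whose spinness (with the correct restriction to $\mathrm{Spin}(M)$ matching that of $V_n$) you actually verify; as written, the oddness/parity bookkeeping you correctly flag as ``indispensable'' is only carried out on one side of the gluing.
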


\begin{figure}[h]
\def\svgscale{0.4}
\centerline{
\begingroup%
  \makeatletter%
  \providecommand\color[2][]{%
    \errmessage{(Inkscape) Color is used for the text in Inkscape, but the package 'color.sty' is not loaded}%
    \renewcommand\color[2][]{}%
  }%
  \providecommand\transparent[1]{%
    \errmessage{(Inkscape) Transparency is used (non-zero) for the text in Inkscape, but the package 'transparent.sty' is not loaded}%
    \renewcommand\transparent[1]{}%
  }%
  \providecommand\rotatebox[2]{#2}%
  \newcommand*\fsize{\dimexpr\f@size pt\relax}%
  \newcommand*\lineheight[1]{\fontsize{\fsize}{#1\fsize}\selectfont}%
  \ifx\svgwidth\undefined%
    \setlength{\unitlength}{372.28089821bp}%
    \ifx\svgscale\undefined%
      \relax%
    \else%
      \setlength{\unitlength}{\unitlength * \real{\svgscale}}%
    \fi%
  \else%
    \setlength{\unitlength}{\svgwidth}%
  \fi%
  \global\let\svgwidth\undefined%
  \global\let\svgscale\undefined%
  \makeatother%
  \begin{picture}(1,1.14998823)%
    \lineheight{1}%
    \setlength\tabcolsep{0pt}%
    \put(0,0){\includegraphics[width=\unitlength,page=1]{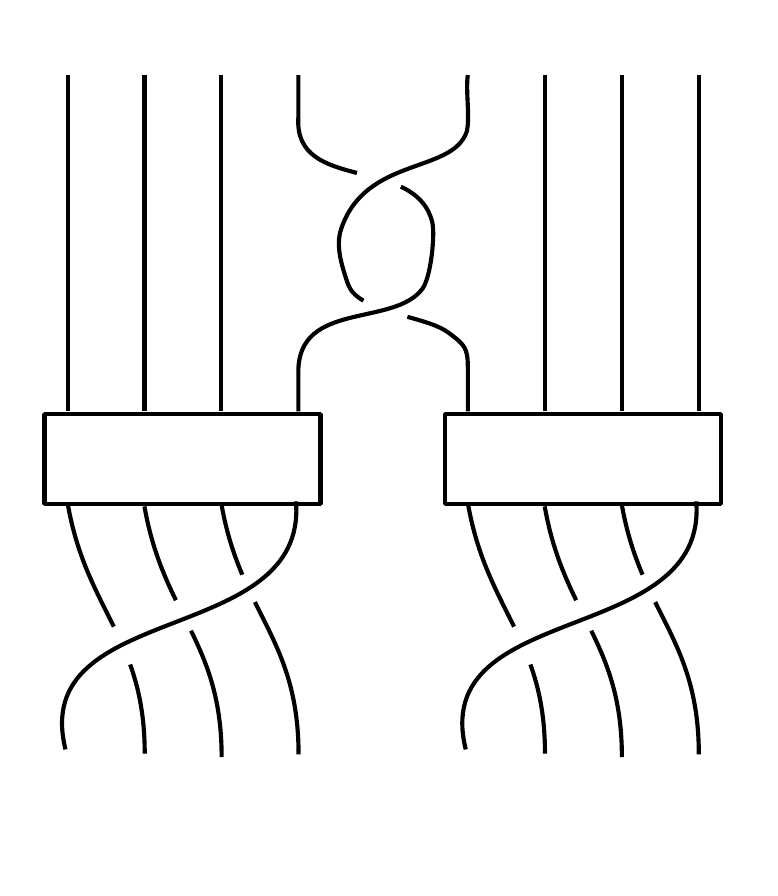}}%
    \put(0.22623748,0.5422397){\color[rgb]{0,0,0}\makebox(0,0)[lt]{\lineheight{1.25}\smash{\begin{tabular}[t]{l}$1$\end{tabular}}}}%
    \put(0.74928946,0.53823256){\color[rgb]{0,0,0}\makebox(0,0)[lt]{\lineheight{1.25}\smash{\begin{tabular}[t]{l}$1$\end{tabular}}}}%
    \put(0,0){\includegraphics[width=\unitlength,page=2]{TheirLink.pdf}}%
    \put(0.22080001,0.0463945){\color[rgb]{0,0,0}\makebox(0,0)[lt]{\lineheight{1.25}\smash{\begin{tabular}[t]{l}$n$\end{tabular}}}}%
    \put(0.72849946,0.04984403){\color[rgb]{0,0,0}\makebox(0,0)[lt]{\lineheight{1.25}\smash{\begin{tabular}[t]{l}$n$\end{tabular}}}}%
  \end{picture}%
\endgroup%
}
\caption{The link $L_n$ is obtained by taking the braid closure of the braid depicted in this figure.}\label{LiuPicciriolLink}
\end{figure}

It is possible that extensions of Liu and Piccirillo’s methods could produce 3-manifolds with Dehn surgery number greater than two. To explore this direction, it is valuable to develop additional examples. In this note, we combine Liu and Piccirillo’s approach with the author's previous work \cite{azarpendar2023negative} on chainmail links, introducing a systematic method for constructing such examples. These examples arise as surgeries on chainmail links. An application of this construction appears in Theorem \ref{Theorem:Mainexample}.

\begin{theo}\label{Theorem:Mainexample}
Let $K_n$ be the knot shown in Figure \ref{Figure:Myexample}. Then for $n$ sufficiently large, the branched double cover $\Sigma(S^3,K_n)$ is not a surgery on a knot in $S^3$.
\end{theo}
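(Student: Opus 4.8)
The plan is to produce, for $n$ large, a closed smooth spin $4$-manifold that violates Furuta's $10/8$-theorem under the assumption that $\Sigma(S^3,K_n)$ is surgery on a knot, following the strategy of Liu and Piccirillo \cite{liu2024bounding} but feeding in the chainmail description of the branched cover from \cite{azarpendar2023negative}. First I would record two fillings of $Y_n:=\Sigma(S^3,K_n)$. On the branched-cover side, pushing a Seifert surface of $K_n$ into $B^4$ and taking the double branched cover yields a $4$-manifold $W_n$ with $\partial W_n=Y_n$ whose intersection form is the symmetrized Seifert form of $K_n$; since this form has even diagonal it is automatically even and $W_n$ is spin, and the chainmail structure of $K_n$ is arranged so that it is close to negative definite, with $-\sigma(K_n)=b_2^-(W_n)=8k_n+O(1)$ and $b_2^+(W_n)=O(1)$, where $k_n\to\infty$. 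Before going further I would verify the standing hypotheses: $Y_n$ is irreducible and $H_1(Y_n)\cong\mathbb{Z}/\det(K_n)$ with $\det(K_n)$ odd, so $Y_n$ is a rational homology sphere, indeed a $\mathbb{Z}/2$-homology sphere carrying a unique spin structure.

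Next I would set up the contradiction. Assume $Y_n=S^3_{p/q}(K)$; comparing first homology forces $|p|=\det(K_n)$, so $p$ is odd. Expanding $p/q$ as a continued fraction of the appropriate sign presents $Y_n$ as the boundary of a linear plumbing on $K$ together with a chain of unknots; the intersection form of this filling $X$ is the (tridiagonal) linking matrix, which I would arrange to be positive definite, so that $b_2^-(X)=0$. I would then glue $Z_n:=W_n\cup_{Y_n}\overline{X}$ into a closed $4$-manifold. Since $Y_n$ is a rational homology sphere, Mayer--Vietoris gives $b_2^{\pm}(Z_n)=b_2^{\pm}(W_n)+b_2^{\pm}(\overline{X})$ and $\sigma(Z_n)=\sigma(W_n)-\sigma(X)$. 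Because $\overline{X}$ is negative definite it contributes only to $b_2^-$, so a priori $b_2^+(Z_n)=O(1)$ while $|\sigma(Z_n)|=8k_n+b_2(X)\to\infty$.

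The manifold $Z_n$ is not yet spin: the continued-fraction filling for the odd number $p$ necessarily has odd framings, so its form is odd. Here I would invoke Kaplan's characteristic sublink calculus to present $Z_n$ by a framed link, locate its characteristic sublink, and blow it up to reach a genuinely spin closed $4$-manifold $Z_n'$; each blow-up alters the form by an orthogonal $\langle\pm1\rangle$, changing $b_2^{\pm}$ and $\sigma$ by one. Writing the resulting even form as $mH\oplus N(-E_8)$, one has $m=b_2^+(Z_n')$ and $|\sigma(Z_n')|=8|N|$, and Furuta's $10/8$-theorem requires $m\ge|N|+1$ whenever $N\neq0$. As $|\sigma(Z_n')|$ grows linearly in $n$, for $n$ large this inequality fails provided $b_2^+(Z_n')$ is known to remain bounded, which is the contradiction sought.

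The crux, and the step I expect to be the main obstacle, is exactly this last boundedness: I must show that making $Z_n$ spin costs only a bounded number of positive blow-ups, uniformly over the unknown pair $(K,p/q)$, so that $b_2^+(Z_n')$ stays $O(1)$ while $b_2^-(Z_n')\to\infty$. The naive continued fraction can carry many odd framings, each seemingly forcing a positive blow-up and inflating $b_2^+$; the heart of the argument is to exploit the freedom in the surgery presentation, together with the even, near-$E_8$ structure of $W_n$ and the $\mathbb{Z}/2$-homology-sphere condition on $Y_n$, to bound the size of the relevant characteristic sublink independently of $n$. This is precisely where the chainmail realization of $K_n$ must be chosen with care, and where Liu and Piccirillo's analysis is adapted to the present family.
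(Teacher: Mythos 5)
Your overall blueprint (two spin fillings of the same $3$-manifold, glue, contradict Furuta) is the right one, but the proposal has genuine gaps at exactly the points where the paper's argument does its work. First, the parity premise is backwards for this example. You assert $\det(K_n)$ is odd, so that $Y_n$ is a $\mathbb{Z}/2$-homology sphere and the surgery slope $p$ is odd. The construction in the paper is arranged so that $|H_1(\Sigma(S^3,K_n))|=\det(A^{D})=4$ is \emph{even} and bounded --- this is condition (i) of Proposition \ref{Proposition:strategy} and it is essential: with $s_n$ even, the cabling identity $Y_n\# L(t_n,s_n)=S^3_{s_nt_n}(\widetilde K_n)$ produces an \emph{even} integral framing $s_nt_n$, so the trace is already spin with $b_2=1$, and the lens-space correction $V_{t_n,s_n}$ has $b_2\le |s_n|<M$. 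That is how the surgery side is kept at bounded $b_2$ uniformly over all $(K,p/q)$. With $p$ odd (your setting) and $q$ odd, no such bounded spin filling of the surgery side is available, which is precisely why your continued-fraction plumbing runs into the ``crux'' you flag and do not resolve. In other words, the step you identify as the main obstacle is not an obstacle to be overcome later; it is the content of the theorem, and the paper dissolves it by engineering even, bounded $H_1$ rather than by bounding characteristic sublinks of arbitrary plumbings.

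Second, the order of operations ``glue $Z_n=W_n\cup_{Y_n}\overline X$, then locate a characteristic sublink and blow up to reach a spin closed manifold $Z_n'$'' cannot work: blowing up a closed $4$-manifold adds $\langle\pm1\rangle$ orthogonally and never converts an odd intersection form into an even one. Kaplan's calculus modifies a \emph{filling} of a spin $3$-manifold, relative to the characteristic sublink determined by a chosen spin structure on the boundary, and must be applied before gluing; moreover when $H^1(Y_n;\mathbb{Z}/2)\ne 0$ (as here, since $|H_1|=4$) you must also match the spin structures induced by the two fillings, which the paper handles by requiring $|f_{\mathfrak{s}_n}|\to\infty$ for \emph{every} sequence of spin structures. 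Finally, on the branched-cover side you substitute the double cover of $B^4$ branched over a pushed-in Seifert surface for the paper's filling $X_{D_n,\mathfrak{s}_n}$ obtained by running Kaplan's algorithm on the Tait chainmail diagram (Proposition \ref{Proposition:compute}); your filling is indeed spin, but the asymptotic $b_2^+=O(1)$, $|\sigma|\to\infty$ is asserted rather than computed, whereas the paper gets $b_2=b_2(X_{D_n})+|f_{\mathfrak{s}_n}|-2$ and $\sigma=\sigma(X_{D_n})-f_{\mathfrak{s}_n}$ with $f_{\mathfrak{s}_n}=f_{\mathfrak{s}_0}+2n$ explicitly. As written, the proposal does not yield a proof.
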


This note is organized as follows. Section \ref{Section:Chainmaillinks} defines chainmail links and describes the Tait surgery diagrams of branched double covers. Section \ref{Section:Kaplan} reviews Kaplan’s algorithm and explains how it can be simplified for chainmail links. Section \ref{Section:Strategy} outlines a general strategy, inspired by Liu and Piccirillo’s framework, for producing 3-manifolds with Dehn surgery number greater than one. Finally, Section \ref{Section:Examples} constructs explicit examples that illustrate the method.

\begin{figure}[h]
\centering
\begin{center}
\includegraphics[scale=0.4]{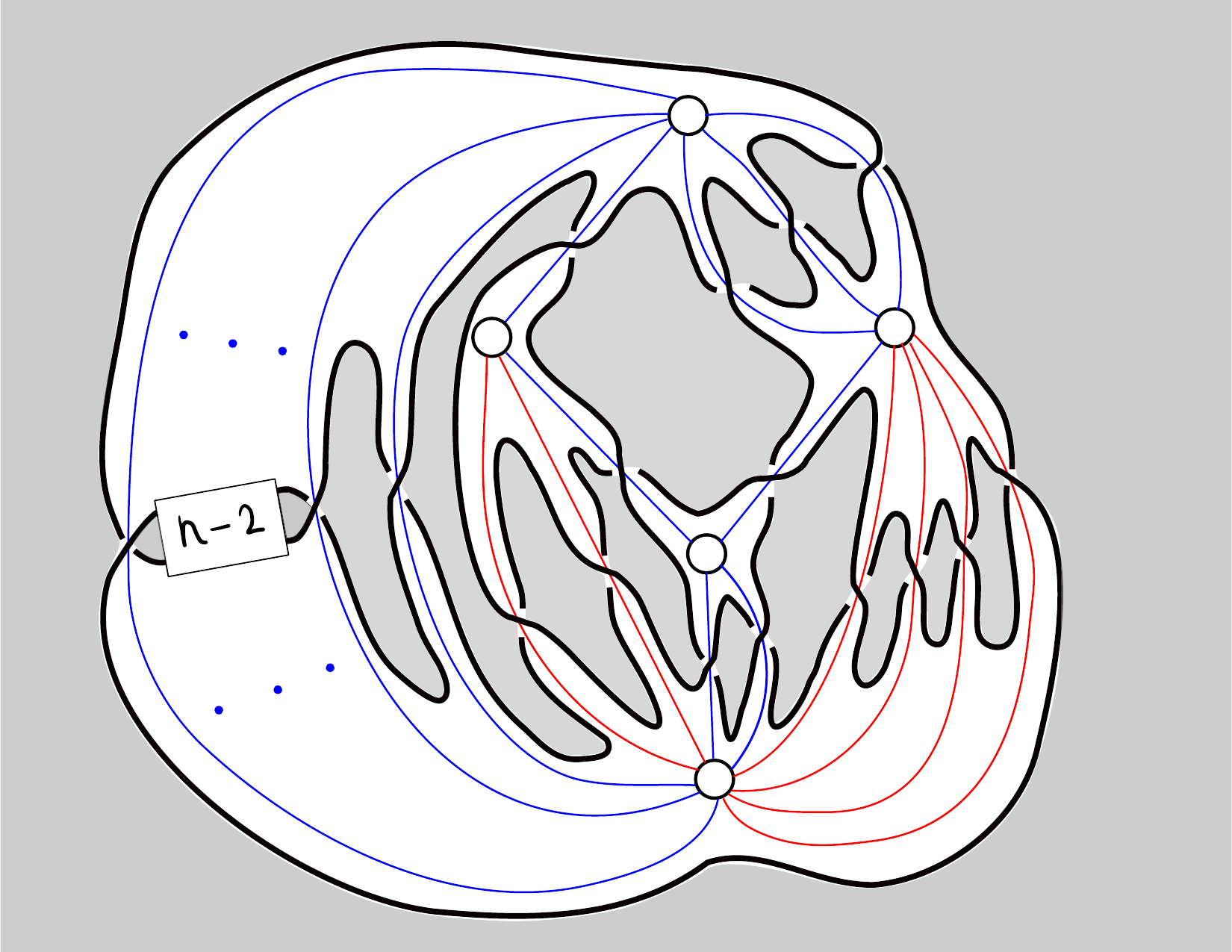}
\end{center}
\caption{Diagram of the knot $K_n$. The knot contains a twist region with $2n-1$ crossings. The checkerboard coloring and the white Tait graph can also be seen in the picture. One can see that the Tait graph is same as $W_n$ in Figure \ref{fig:Wn}.}\label{Figure:Myexample}
\end{figure}

\section{Acknowledgement}
I am deeply grateful to my advisor, Professor András Juhász, whose patience and guidance made this project possible. I also thank Professor Lisa Piccirillo and Professor Beibei Liu for their insightful feedback and encouragement

\section{Chainmail links}\label{Section:Chainmaillinks}
We begin by defining the essential terminology. Let $D$ be a weighted and signed plane multigraph. We denote the sets of vertices and edges of $D$ by $V_D$ and $E_D$, respectively, where: 
$$V_D=\{v_1,\cdots,v_n\}, \ \text{and}, $$
$$E_D=\{e_1,\cdots,e_m\}.$$
Each vertex $v_i \in V_D$ is assigned an integer weight $w_{D}(v_i) \in \mathbf{Z}$, and edge $e_k \in E_D$ is assigned a sign $\mu_{D}(e_k) \in \{+ , -\}$. \\

\begin{defi}
 Let $D$ be a weighted and signed plane multigraph. We construct the framed link $L_D$ as follows: 
\begin{itemize}
    \item For each vertex $v_i \in V_D$, include a counterclockwise-oriented unknot component $L_i$ centered at $v_i$, with framing $w_{D}(v_i)$.
    \item For each edge $e_k \in E_D$ connecting $v_i$ and $v_j$, add a clasp between $L_i$ and $L_j$. The clasp is positive if $\mu_{D}(e_k) = +$, and negative if $\mu_{D}(e_k) = -$.
\end{itemize}
We refer to $L_D$ as the \emph{chainmail link associated to $D$}.
\end{defi}

An example of a weighted and signed plane multigraph and its corresponding chainmail link is depicted in Figures \ref{chainmail1} and \ref{chainmail2}.\\ 

Let $V' \subset V_D$ be a subset of the vertices of $D$, and let $D'=D[V']$ denote the subgraph induced by $V'$. The chainmail link associated to $D'$ forms a sublink of $L_D$, given explicitly by
$$L_{D'} = \bigcup_{v_i \in V'} L_i.$$
In this way, every sublink  of $L_D$ arises from a subgraph of $D$.\\ 

\begin{figure}[h]
\def\svgscale{0.4}
\centerline{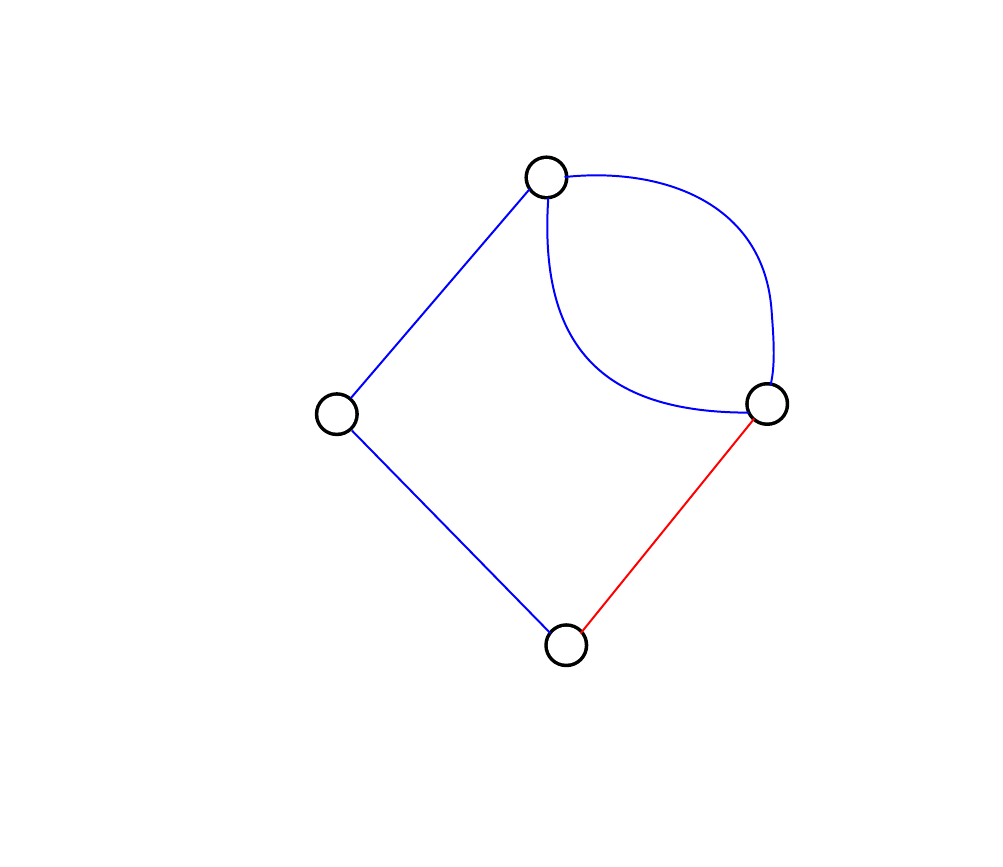}
\caption{A weighted and signed plane multigraph. The edges with color blue (resp.~red) has sign $+$ (resp.~$-$).}\label{chainmail1}
\end{figure}

Since $L_D$ is a framed link, it represents a surgery diagram of a $3-$manifold, denoted $Y_D$, and a Kirby diagram of a simply connected filling of $Y_D$, denoted $X_D$. We refer to any surgery diagram (resp.~Kirby diagram) whose underlying link is a chainmail link as a \emph{chainmail surgery diagram (resp.~chainmail Kirby diagram)}.\\

Polyak \cite{Polyak} showed that all $3-$manifolds admit a chainmail surgery diagram. A notable instance is the \emph{Tait surgery diagram} of a branched double cover of a link, first constructed by Ozsváth and Szabó \cite{SObranch} (see also \cite{azarpendar2023negative}, \cite{owens2022knots} and \cite{Greene2008AST}). We briefly review this construction below.\\ 

\begin{figure}[h]
\def\svgscale{0.4}
\centerline{
\begingroup%
  \makeatletter%
  \providecommand\color[2][]{%
    \errmessage{(Inkscape) Color is used for the text in Inkscape, but the package 'color.sty' is not loaded}%
    \renewcommand\color[2][]{}%
  }%
  \providecommand\transparent[1]{%
    \errmessage{(Inkscape) Transparency is used (non-zero) for the text in Inkscape, but the package 'transparent.sty' is not loaded}%
    \renewcommand\transparent[1]{}%
  }%
  \providecommand\rotatebox[2]{#2}%
  \newcommand*\fsize{\dimexpr\f@size pt\relax}%
  \newcommand*\lineheight[1]{\fontsize{\fsize}{#1\fsize}\selectfont}%
  \ifx\svgwidth\undefined%
    \setlength{\unitlength}{538.54002945bp}%
    \ifx\svgscale\undefined%
      \relax%
    \else%
      \setlength{\unitlength}{\unitlength * \real{\svgscale}}%
    \fi%
  \else%
    \setlength{\unitlength}{\svgwidth}%
  \fi%
  \global\let\svgwidth\undefined%
  \global\let\svgscale\undefined%
  \makeatother%
  \begin{picture}(1,0.78084853)%
    \lineheight{1}%
    \setlength\tabcolsep{0pt}%
    \put(0,0){\includegraphics[width=\unitlength,page=1]{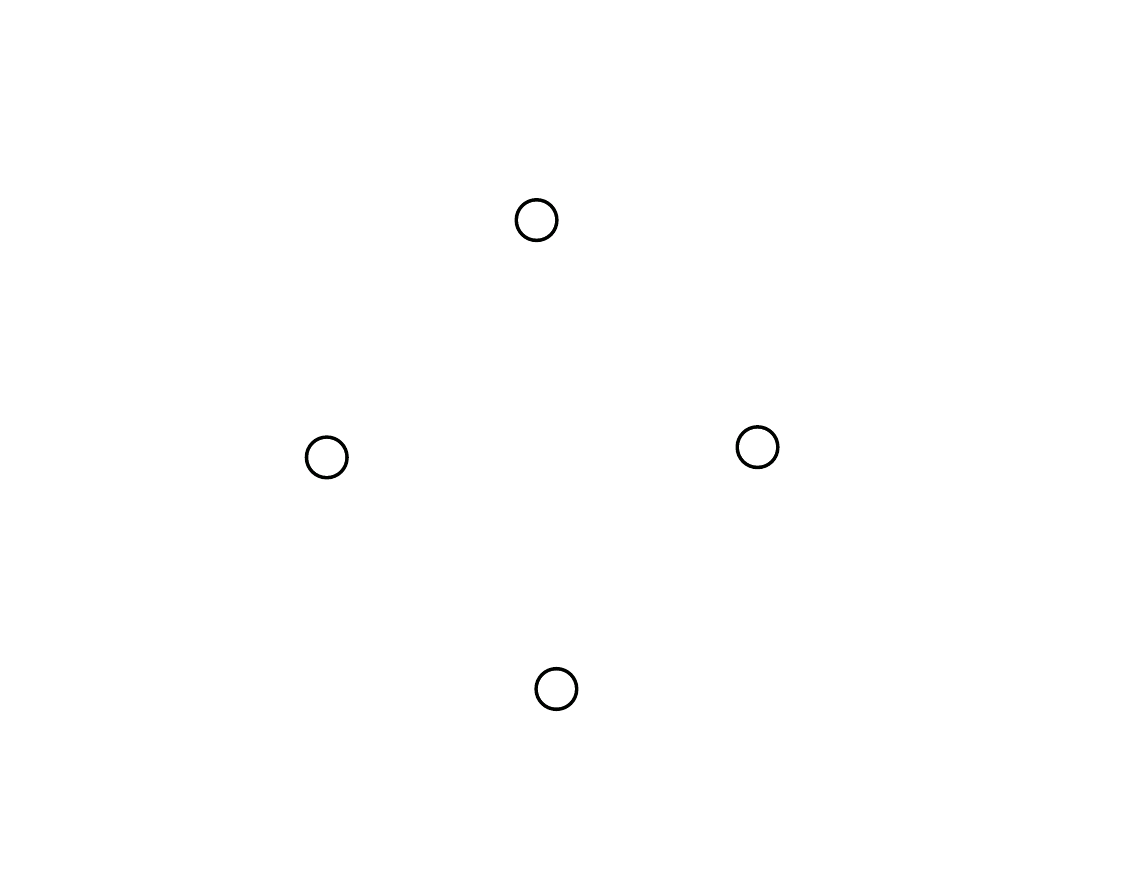}}%
    \put(0.4551219,0.04121207){\color[rgb]{0,0,0}\makebox(0,0)[lt]{\lineheight{1.25}\smash{\begin{tabular}[t]{l}$w(v_4)$\end{tabular}}}}%
    \put(0.03702094,0.37210616){\color[rgb]{0,0,0}\makebox(0,0)[lt]{\lineheight{1.25}\smash{\begin{tabular}[t]{l}$w(v_2)$\end{tabular}}}}%
    \put(0.46558287,0.74055285){\color[rgb]{0,0,0}\makebox(0,0)[lt]{\lineheight{1.25}\smash{\begin{tabular}[t]{l}$w(v_1)$\end{tabular}}}}%
    \put(0.8948935,0.35788513){\color[rgb]{0,0,0}\makebox(0,0)[lt]{\lineheight{1.25}\smash{\begin{tabular}[t]{l}$w(v_3)$\end{tabular}}}}%
    \put(0,0){\includegraphics[width=\unitlength,page=2]{PicciriloLiuChainMail.pdf}}%
  \end{picture}%
\endgroup%
}
\caption{The associated chainmail link (in green)}\label{chainmail2}
\end{figure}

Choose a diagram of the link $L$ and a checkerboard coloring of its regions. There is a standard way to assign signs to the crossings in a colored diagram which is described in Figure~\ref{sign}.\\

The \emph{white Tait graph} $W$ is defined as a weighted and signed plane multigraph where: 
\begin{itemize}
    \item Each white region in the diagram corresponds to a vertex.
    \item An edge $e_c$ is drawn between two regions for any common crossing $c$ on their boundary.
    \item The sign of the edge $e_c$ is $-\mu(c)$.
    \item The weight of a vertex $v$ is defined as 
    $$w_{W}(v) := -\sum_{\substack{e \in E_W \\ e \ \text{adjacent to} \ v}} \mu_{W}(e).$$
\end{itemize}

\begin{figure}[h]
\def\svgscale{0.6}
\centerline{
\begingroup%
  \makeatletter%
  \providecommand\color[2][]{%
    \errmessage{(Inkscape) Color is used for the text in Inkscape, but the package 'color.sty' is not loaded}%
    \renewcommand\color[2][]{}%
  }%
  \providecommand\transparent[1]{%
    \errmessage{(Inkscape) Transparency is used (non-zero) for the text in Inkscape, but the package 'transparent.sty' is not loaded}%
    \renewcommand\transparent[1]{}%
  }%
  \providecommand\rotatebox[2]{#2}%
  \newcommand*\fsize{\dimexpr\f@size pt\relax}%
  \newcommand*\lineheight[1]{\fontsize{\fsize}{#1\fsize}\selectfont}%
  \ifx\svgwidth\undefined%
    \setlength{\unitlength}{398.1331989bp}%
    \ifx\svgscale\undefined%
      \relax%
    \else%
      \setlength{\unitlength}{\unitlength * \real{\svgscale}}%
    \fi%
  \else%
    \setlength{\unitlength}{\svgwidth}%
  \fi%
  \global\let\svgwidth\undefined%
  \global\let\svgscale\undefined%
  \makeatother%
  \begin{picture}(1,0.37352217)%
    \lineheight{1}%
    \setlength\tabcolsep{0pt}%
    \put(0,0){\includegraphics[width=\unitlength,page=1]{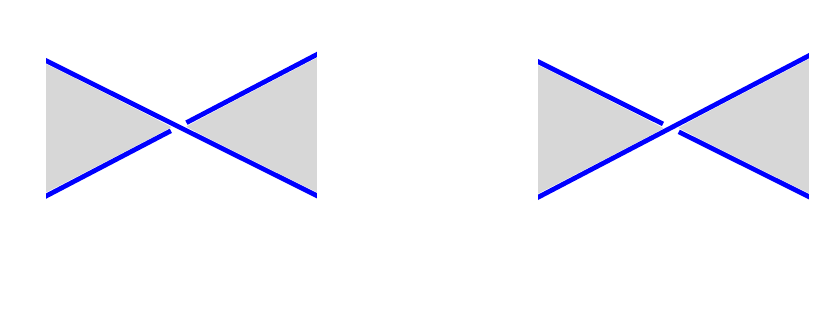}}%
    \put(0.12319831,0.03894915){\color[rgb]{0,0,0}\makebox(0,0)[lt]{\lineheight{1.25}\smash{\begin{tabular}[t]{l}$\mu=+1$\end{tabular}}}}%
    \put(0.72317562,0.03895458){\color[rgb]{0,0,0}\makebox(0,0)[lt]{\lineheight{1.25}\smash{\begin{tabular}[t]{l}$\mu=-1$\end{tabular}}}}%
  \end{picture}%
\endgroup%
}
\caption{The sign of a crossing in a colored diagram}\label{sign}
\end{figure}

Choosing an arbitrary root vertex $r \in V_W$ and removing it from $W$ yields the \emph{reducred white Tait graph $\widetilde{W}$}. The Tait surgery diagram $L_{\widetilde{W}}$ is the chainmail link associated to $\widetilde{W}$.
\begin{theo}[\cite{SObranch}]\label{Theorem:OS}
For any link $L$, and any reduced white Tait graph of $L$ such as $\widetilde{W}$, we have
$$\Sigma(S^3,L)\cong Y_{\widetilde{W}}.$$
That is, the chainmail link associated to $\widetilde{W}$ is a surgery diagram for the branched double cover of $S^3$ over $L$. We refer to this as the Tait surgery diagram.
\end{theo}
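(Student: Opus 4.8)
The plan is to realize $\Sigma(S^3,L)$ as the boundary of a $4$-manifold built directly from the checkerboard surface, and then to recognize the chainmail link $L_{\widetilde{W}}$ as the attaching link of an explicit handle decomposition of that $4$-manifold. Let $F_b$ be the black checkerboard surface of the chosen diagram, i.e.\ the disk-band surface with one disk for each black region and one half-twisted band for each crossing. Pushing the interior of $F_b$ into $B^4$ and forming the double cover $X$ of $B^4$ branched over the resulting properly embedded surface yields a $4$-manifold with $\partial X \cong \Sigma(S^3,L)$; this is the standard fact that the double branched cover of $B^4$ over a pushed-in spanning surface of $L$ has boundary $\Sigma(S^3,L)$, valid even when $F_b$ is non-orientable. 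By the Gordon--Litherland description, $H_2(X)\cong H_1(F_b)$ and the intersection form of $X$ is the associated linking form, which expressed in the face-cycle basis of the planar black graph is exactly the (reduced, white) Goeritz form.

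Next I would produce a handle decomposition of $X$ whose attaching link is $L_{\widetilde{W}}$. Choosing a spanning tree of the black graph to merge the black disks into a single $0$-handle turns $F_b$ into a disk with $b_1(F_b)=w-1$ bands, where $w$ is the number of white regions; the residual bands are indexed by the non-tree crossings, equivalently by the bounded complementary faces of the black graph, equivalently (by planar duality) by the white regions other than the one lying in the unbounded face. The double branched cover of $B^4$ over a disk-with-bands has one $0$-handle and one $2$-handle per band, attached along the lifts of the band cores. I would then check that, under this identification, the attaching circle associated to a white region is an unknot encircling that region, that two such circles are clasped precisely when the corresponding regions share a crossing, and that the framing of each circle equals the vertex weight $w_W(v)$. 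Matching this combinatorics to the definition of the chainmail link identifies the attaching link with $L_{\widetilde{W}}$, whence $Y_{\widetilde{W}}=\partial X\cong\Sigma(S^3,L)$.

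The main obstacle is the local bookkeeping at each crossing: one must verify that the two sheets of the branched cover glue along each band so as to produce a clasp of sign exactly $-\mu(c)$ compatible with the conventions of Figure \ref{sign}, and that summing the band contributions at a fixed region reproduces the full weight $w_W(v)=-\sum_{e\ni v}\mu_W(e)$ rather than the count restricted to $\widetilde{W}$ --- the edges running to the deleted root still contribute to the framing even though they contribute no clasp. Making these signs, orientations (the unknots are oriented counterclockwise), and framings mutually consistent is the delicate part, and I expect it to occupy the bulk of the argument. Finally, the independence of the root requires a remark: different roots correspond to sending different white regions to the unbounded face, i.e.\ to different handle presentations of the same $X$, so every reduced white Tait graph yields the same boundary $\Sigma(S^3,L)$; alternatively, one can argue by showing that changing the root, or the underlying diagram by a Reidemeister move, is realized by Kirby moves on the chainmail link and hence leaves the surgered $3$-manifold unchanged.
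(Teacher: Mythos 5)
The paper does not prove this statement: it is quoted verbatim from Ozsv\'ath--Szab\'o (\cite{SObranch}) and used as a black box, so there is no internal proof to compare against. Judged on its own terms, your outline follows the standard route in the literature (pushed-in checkerboard surface, double cover of $B^4$ branched over it, Gordon--Litherland/Goeritz form), and the issues you flag at the end --- crossing-by-crossing sign conventions and independence of the root --- are indeed the right things to worry about.

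There is, however, one genuine gap in the central step. When you put the black surface into disk-with-bands form using a spanning tree, the resulting $2$-handles of the branched cover are indexed by the \emph{non-tree crossings} and attached along lifts of the band cores; the classes they represent in $H_1(F_b)$ are the fundamental cycles of the chosen tree, not the face cycles. Your chain of identifications ``non-tree crossings $\leftrightarrow$ bounded faces of the black graph $\leftrightarrow$ non-root white regions'' is a bijection of index sets only: the two collections of curves differ by a (unimodular, but nontrivial) change of basis, and the linking matrix of the band-core lifts is the Gordon--Litherland form in the fundamental-cycle basis rather than the Goeritz matrix. Consequently the claim that ``the attaching circle associated to a white region is an unknot encircling that region'' does not follow from the decomposition as described; one must either perform the handle slides realizing this change of basis explicitly in the Kirby diagram, or replace this step by a more local argument (e.g.\ an induction on crossings via the Montesinos trick). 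Note also that matching intersection forms is not enough here --- the theorem asserts a homeomorphism of the boundary $3$-manifolds, so the identification must be made at the level of the framed link, not merely of $H_2$ with its pairing.
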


\section{Kaplan algorithm on chainmail Kirby diagrams}\label{Section:Kaplan}
Now we recall a computation of the author \cite{azarpendar2023negative} concerning the outcome of Kaplan’s algorithm when applied to chainmail Kirby diagrams. Throughout this section, we continue to use the notation established in Section \ref{Section:Chainmaillinks}.\\

Kaplan showed that the spin structures on $Y_D$ are in one-to-one correspondence with characteristic sublinks of $L_D$. Given a spin structure $\mathfrak{s} \in \text{Spin}(Y_D)$, let $L_{\mathfrak{s}} \subset L_{D}$ be the corresponding characteristic sublink. As noted in Section \ref{Section:Chainmaillinks}, sublinks of $L_D$ correspond to induced subgraphs of $D$. Let $S$ denote the subgraph associated to $L_{\mathfrak{s}}$ i.e.
$$ V_S \subseteq V_D\ , \ S=D[V_S] \ \text{and} \ L_{S} = L_{\mathfrak{s}}.$$
We call such subgraphs \emph{characteristic subgraphs} of $D$.\\

Kaplan introduced an algorithm to turn $L_S$ to the empty sublink through Kirby moves, producing a simply-connected spin filling $X_{D,\mathfrak{s}}$ of $(Y_D,\mathfrak{s})$.\\

The author \cite{azarpendar2023negative} simplified the Kaplan's algorithm for chainmail Kirby diagrams, enabling computation of the second Betti number and signature of $X_{D,\mathfrak{s}}$. We recall this result in Proposition \ref{Proposition:compute}, and give a sketch of the proof. For more details we refer the reader to \cite{azarpendar2023negative}.\\

Before stating the result, we need to introduce a piece of notation in Definition \ref{Definition:edgesum}.

\begin{defi}\label{Definition:edgesum}
Let $D$ be a weighted and signed multigraph. For any pair of vertices $v,v' \in V_{D}$, we define $\mu_{D}(E(v,v'))$ to be the signed count of edges between $v$ and $v'$.
\end{defi}

\begin{prop}\label{Proposition:compute}
Let $L_{S}$ and $S$ respectively be the characteristic sublink and subgraph corresponding to $\mathfrak{s} \in \text{Spin}(Y_D)$. We define $f_{\mathfrak{s}}$ as follows: 
$$f_{\mathfrak{s}}:=\sum_{v \in V(S)} w_{D}(v) + 2 \sum_{v,v' \in V(S)} \mu_{D}(E(v,v').$$
Then there exist a spin filling $X_{D,\mathfrak{s}}$ of $(Y_D, \mathfrak{s})$ such that
$$b_2(X_{D,\mathfrak{s}})= b_2(X_D) + |f_{\mathfrak{s}}|-2 \ \text{and} \ $$
$$\sigma(X_{D,\mathfrak{s}})= \sigma(X_D) - f_{\mathfrak{s}}.$$
\end{prop}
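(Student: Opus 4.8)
The plan is to recognize $f_{\mathfrak{s}}$ as the framing (self-linking number) of the knot obtained by band-summing all components of the characteristic sublink $L_{S}$ into a single one, and then to run Kaplan's elimination on that single framed knot, tracking how $b_2$ and $\sigma$ change relative to $X_D$. The two ingredients are therefore a linking-number computation that identifies $f_{\mathfrak{s}}$ geometrically, and a handle-calculus bookkeeping that converts the elimination into the claimed numerical shifts.

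First I would pin down the linking data intrinsic to a chainmail link. Each component $L_v$ is an unknot carrying framing $w_D(v)$, and a single clasp between $L_v$ and $L_{v'}$ contributes two crossings of a common sign between the two components, hence $\pm 1$ to their linking number according to the clasp's sign; summing over the edges gives $\lk(L_v,L_{v'})=\mu_D(E(v,v'))$. Band-summing the $|V_S|$ components of $L_S$ into a single knot $U$ by handle slides does not alter the diffeomorphism type of $X_D$, and since self-linking is additive under band sum,
$$\operatorname{fr}(U)=\sum_{v\in V_S}w_D(v)+2\sum_{\{v,v'\}\subset V_S}\lk(L_v,L_{v'})=\sum_{v\in V_S}w_D(v)+2\sum_{\{v,v'\}\subset V_S}\mu_D(E(v,v'))=f_{\mathfrak{s}},$$
exactly the definition. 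The step I expect to be the main obstacle is verifying that the bands may be chosen—following a spanning forest of $S$ drawn in the plane—so that $U$ is genuinely an \emph{unknot}; this is where the planarity of $D$ and the nested clasp structure of a chainmail link are used, and it is the crux of the author's simplification in \cite{azarpendar2023negative}.

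Finally, with $L_{\mathfrak{s}}$ reduced to a single framed unknot $U$ of framing $f_{\mathfrak{s}}$, I would apply Kaplan's move to eliminate it: blowing up and sliding replaces the framing-$f_{\mathfrak{s}}$ unknot by a definite even linear plumbing, namely a chain of $|f_{\mathfrak{s}}|-1$ unknots each framed $-2\operatorname{sign}(f_{\mathfrak{s}})$, which is spin and caps off the same lens-space summand, thereby producing a spin filling $X_{D,\mathfrak{s}}$ of $(Y_D,\mathfrak{s})$. Comparing intersection forms, the discarded unknot contributes $(b_2,\sigma)=(1,\operatorname{sign}(f_{\mathfrak{s}}))$ while the inserted chain contributes $\bigl(|f_{\mathfrak{s}}|-1,\,-\operatorname{sign}(f_{\mathfrak{s}})(|f_{\mathfrak{s}}|-1)\bigr)$, so the net change is $\Delta b_2=|f_{\mathfrak{s}}|-2$ and $\Delta\sigma=-f_{\mathfrak{s}}$, which is the assertion. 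The only remaining care is to fix orientation conventions so that the signs of the clasps, the induced linking numbers, and the signature contributions are mutually consistent; granting the unknotting of the previous step, this is routine bookkeeping.
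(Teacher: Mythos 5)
Your proposal follows essentially the same route as the paper: handle-slide (band-sum) the characteristic sublink into a single unknot whose framing, by the iterated framing-change formula, equals $f_{\mathfrak{s}}$ --- with the planar chainmail structure guaranteeing the result stays an unknot, exactly the content of the cited Lemma 4.5 of \cite{azarpendar2023negative} --- and then eliminate it by blowing up $|f_{\mathfrak{s}}|-1$ meridians and blowing down, which gives the stated changes in $b_2$ and $\sigma$. The only cosmetic inaccuracy is that the configuration left after the blow-down is not a linear $-2$-chain but $|f_{\mathfrak{s}}|-1$ mutually linked meridians with intersection matrix $-\mathrm{sign}(f_{\mathfrak{s}})(I+J)$; this is still even and definite of the same rank and signature, so your bookkeeping is unaffected.
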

\begin{proof}
  
Note that, as the notation suggests, the characteristic sublink $L_S$ is the chainmail link associated to $S$.\\

We begin with a modified version of the first part of the Kaplan's algorithm (described in \cite[Definition 4.4]{azarpendar2023negative}). At each step, we pick two components $L_i,L_j$ (corresponding to vertices $v_i , v_j \in V(S)$) and we slide $L_i$ over $L_j$. The component $L_j$ is then deleted from the characteristic sublink. This process is repeated until only a single component remains.\\

As shown in \cite[Lemma 4.5]{azarpendar2023negative}, the handle slides can be performed in an order that ensures the characteristic sublink remains a chainmail link at each step. The local model of the handle slides can be seen in Figure \ref{Figure:MK1pic}. On the level of the underlying graph, the handle slide corresponds to contracting $v_i$ and $v_j$ into a single vertex. 

\begin{figure}[h]
\def\svgscale{0.6}
\centerline{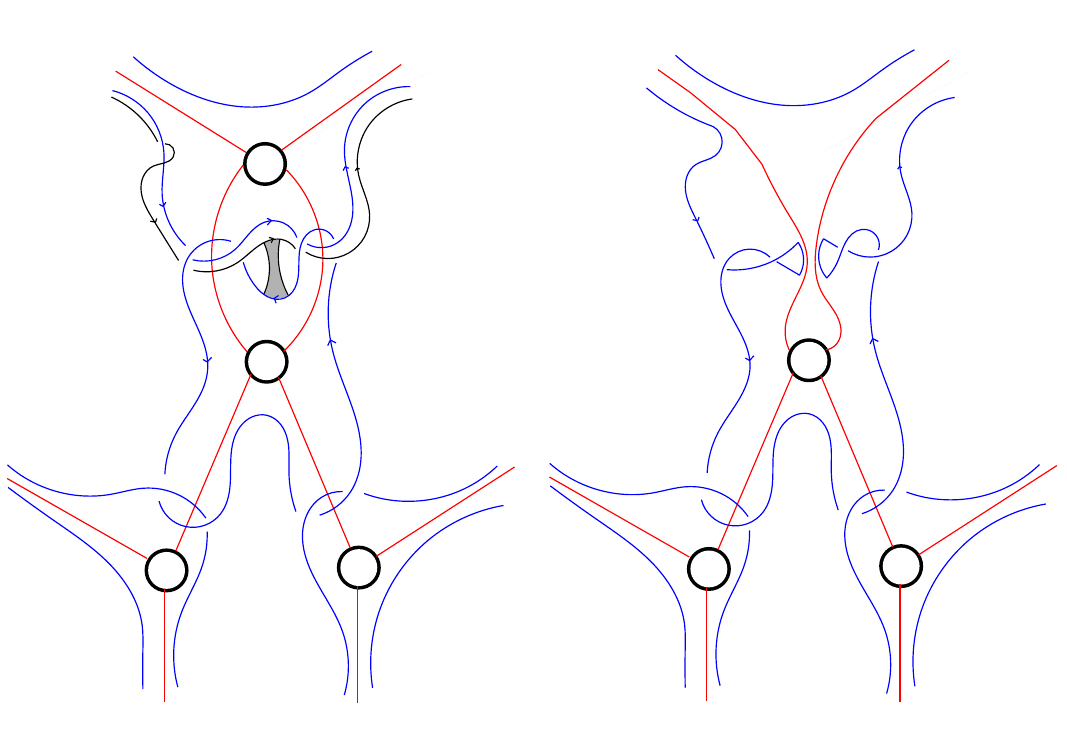}
\caption{Local model of a handle slide in a chainmail link \cite{azarpendar2023negative}}\label{Figure:MK1pic}
\end{figure}

After completing this modified version of Kaplan's algorithm the characteristic sublink becomes an unknot.\\

According to the framing change formula sliding $L_i$ over $L_j$ updates the framing on $L_i$ to 
$$w_{D}(v_i) + w_{D}(v_j) + 2 \cdot \text{lk}(L_i,L_j) = w_{D}(v_i) + w_{D}(v_j) + 2 \sum_{e \in E(v_i,v_j)} \mu_{D}(e).$$
By iterating this formula we obtain that the framing of the final unknot is
$$\sum_{v \in V(S)} w_{D}(v) + 2 \sum_{v,v' \in V(S)} \mu_{D}(E(v,v')) = f_{\mathfrak{s}}.$$

Up to this point, we have only applied handle slides, so we still have a Kirby diagram of $X_{D}$.\\

Now to construct $X_{D,\mathcal{s}}$, we perform a sequence of blow-ups and a blow-down. Blowing up $|f_{\mathfrak{s}}|-1$ meridians converts the framing of the characteristic unknot to $\pm1$, and blowing down this unknot then yields $X_{D,\mathfrak{s}}$.\\ 

Computation of the second Betti number and signature of $X_{D,\mathfrak{s}}$ is straightforward using the description above.   
\end{proof}

\section{Strategy for bounding the Dehn surgery number}\label{Section:Strategy}
In this section, we introduce a method for building examples of 3-manifolds with Dehn sugery number higher than one, using Liu and Piccirillo's argument. This is explained in Proposition \ref{Proposition:strategy}.

\begin{prop}\label{Proposition:strategy} Let $\{D_n\}$ be a 
 family of weighted and signed plane multigraphs for $n \in \mathbf{N}$, such that: 
 \begin{enumerate}[label=(\roman*)]
     \item $\forall n \in \mathbf{N}: |H_1(Y_{D_n})| = 0 \ \text{(mod 2)}$
     \item $ \exists M$ such that $\forall n \in \mathbf{N}: |H_1(Y_{D_n})|<M,$
     \item $ \exists B$ such that $\forall n \in \mathbf{N}: |V_{D_n}|<B,$
     \item For any sequnce $(\mathfrak{s}_{n})_{n \in \mathbf{N}}$ with $\mathfrak{s}_{n} \in \text{Spin}(Y_{D_n})$, we have: 
     $$\lim_{n \to \infty} |f_{\mathfrak{s}_{n}}| = +\infty.$$
 \end{enumerate}
 Then for sufficiently large $n$, the 3-manifold $Y_{D_n}$ can not be obtained by Dehn surgery on a knot.
\end{prop}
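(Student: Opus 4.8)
The plan is to argue by contradiction, combining the spin filling $X_{D_n,\mathfrak{s}}$ furnished by Proposition \ref{Proposition:compute} with a second, \emph{small} spin filling coming from a hypothetical knot-surgery description of $Y_{D_n}$, and then playing the growth of $|f_{\mathfrak{s}}|$ against the slope $10/8$ in Furuta's theorem, exactly in the spirit of \cite{liu2024bounding}. Suppose, for contradiction, that $Y_{D_n}$ is surgery on a knot for arbitrarily large $n$, say $Y_{D_n}\cong S^3_{p_n/q_n}(K_n)$. By hypothesis $Y_{D_n}$ is a rational homology sphere, so $|H_1(Y_{D_n})|=|p_n|$ is even by (i) and bounded by $M$ via (ii); since $\gcd(p_n,q_n)=1$, this forces $q_n$ to be odd.

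The key step is to produce a uniformly bounded spin filling from the surgery. I would establish the following: \emph{when $p$ is even and $q$ is odd, $S^3_{p/q}(K)$ bounds a spin $4$-manifold $W$ whose $b_2(W)$, and hence $|\sigma(W)|\le b_2(W)$, is bounded in terms of $|p|$ alone.} To see this, expand $p/q$ as an even negative continued fraction $[2a_1,\dots,2a_\ell]^{-}$ and let $W$ be obtained by attaching $2$-handles to $B^4$ along the chain link whose innermost component is $K$ with framing $2a_1$ and whose remaining components are meridional unknots with framings $2a_2,\dots,2a_\ell$; the standard slam-dunk/continued-fraction calculus identifies $\partial W$ with $S^3_{p/q}(K)$. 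All framings are even, so the intersection form is even, and since $W$ is simply connected it is therefore spin. The one quantitative input is that the length $\ell$ of an even continued fraction of $p/q$ is bounded by a function of $|p|$ (the continuant grows with $\ell$), which yields $b_2(W)=\ell\le g(M)$ uniformly in $n$. This is precisely the place where hypothesis (i) is used: the parity of $|H_1|$ is exactly what makes a spin filling of the surgery available.

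Let $\mathfrak{s}_n\in\text{Spin}(Y_{D_n})$ be the spin structure that $W_n$ induces on its boundary, and let $X_{D_n,\mathfrak{s}_n}$ be the spin filling from Proposition \ref{Proposition:compute}. Gluing along $Y_{D_n}$ produces a \emph{closed} spin $4$-manifold $Z_n:=X_{D_n,\mathfrak{s}_n}\cup_{Y_{D_n}}(-W_n)$. Because $Y_{D_n}$ is a rational homology sphere, Mayer--Vietoris and Novikov additivity give
\[
b_2(Z_n)=b_2(X_{D_n,\mathfrak{s}_n})+b_2(W_n),\qquad \sigma(Z_n)=\sigma(X_{D_n,\mathfrak{s}_n})-\sigma(W_n).
\]
Substituting Proposition \ref{Proposition:compute}, using $b_2(X_{D_n})=|V_{D_n}|<B$ and $|\sigma(X_{D_n})|\le|V_{D_n}|<B$ from (iii), together with the uniform bounds on $b_2(W_n)$ and $|\sigma(W_n)|$, I obtain a constant $C$ independent of $n$ with
\[
b_2(Z_n)\le |f_{\mathfrak{s}_n}|+C,\qquad |\sigma(Z_n)|\ge |f_{\mathfrak{s}_n}|-C.
\]

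By (iv) we have $|f_{\mathfrak{s}_n}|\to\infty$, so $|\sigma(Z_n)|\to\infty$; in particular $\sigma(Z_n)\neq 0$ and $Z_n$ is indefinite for large $n$ (a closed spin $4$-manifold with definite form has trivial form by Donaldson's theorem, contradicting $b_2(Z_n)\to\infty$). Furuta's $10/8$-theorem then yields $b_2(Z_n)\ge \tfrac{10}{8}|\sigma(Z_n)|+2$, and combining with the estimates above gives
\[
|f_{\mathfrak{s}_n}|+C\ \ge\ \tfrac{10}{8}\bigl(|f_{\mathfrak{s}_n}|-C\bigr)+2.
\]
Since $\tfrac{10}{8}>1$, the right-hand side outgrows the left as $|f_{\mathfrak{s}_n}|\to\infty$, a contradiction; hence $Y_{D_n}$ is not surgery on a knot for all sufficiently large $n$. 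I expect the main obstacle to be the bounded spin-filling lemma, i.e.\ showing that $b_2(W_n)$ (equivalently the even continued fraction length) is controlled purely by $M$ and that the filling is genuinely spin for the induced structure; once this is in place, the remainder is Novikov additivity plus a single limit in Furuta's inequality.
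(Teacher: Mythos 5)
Your argument is correct and reaches the contradiction the same way the paper does (glue the Kaplan filling $X_{D_n,\mathfrak{s}_n}$ of Proposition \ref{Proposition:compute} to a uniformly bounded spin filling coming from the hypothetical surgery, then apply Novikov additivity and Furuta), but you construct the bounded filling by a genuinely different route. The paper first converts the rational surgery to an integral one via the cabling identity $S^3_{s/t}(K)\#L(t,s)=S^3_{st}(T_{t,s}(K))$, uses the (even-framed) trace $X_{st}(T_{t,s}(K))$ as a spin filling of the connected sum, and caps off the lens-space summand with an auxiliary spin filling $V_{t,s}$ of size $O(|s|)$; you instead build a spin filling of $S^3_{p/q}(K)$ directly as a $2$-handlebody on a chain link dictated by an even negative continued fraction of $p/q$. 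Your route avoids the cabling trick and the separate lens-space filling entirely, which is arguably cleaner, but it concentrates all the work in the lemma you flag as the main obstacle: you must show that for $p$ even and $q$ odd an expansion $[2a_1,\dots,2a_\ell]^-$ exists \emph{and} that $\ell$ is controlled by $|p|$ alone. The second point is more delicate than ``the continuant grows with $\ell$'': if interior partial quotients are allowed to vanish, continuants do not grow and the length is unbounded (e.g.\ $[2a,0,2b]^-=2a+2b$), so you need to run the nearest-even-integer algorithm and observe that only the first entry can be $0$ (after the first step $|p^{(k)}|>|q^{(k)}|$), after which the continuant inequality $|K(c_1,\dots,c_m)|\ge m+1$ for $|c_i|\ge 2$ gives $\ell\le |p|+1<M+1$. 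With that lemma in place (it is essentially Kaplan's construction of spin fillings and is standard), your estimates $b_2(Z_n)\le|f_{\mathfrak{s}_n}|+C$ and $|\sigma(Z_n)|\ge|f_{\mathfrak{s}_n}|-C$, the use of hypothesis (iv) for the boundary spin structure induced by $W_n$, and the final appeal to Furuta (with the definite case excluded by Donaldson) are all sound and match the paper's limiting argument $|\sigma|/b_2\to 1$.
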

\begin{proof}
We just go through Liu and Piccirillo's argument. Assume the contrary, that there exist knots $K_n$ and integers $s_n , t_n \in \mathbf{Z}$ such that
\begin{equation}\label{Equation:Dehnsurgery}
Y_{D_n} = S^3_{s_n/t_n}(K_n).
\end{equation}
 Note that we have 
 $$|H_{1}(Y_{D_n})| = |H_{1}(S^3_{s_n/t_n}(K_n))|=s_{n}.$$
Hence $s_n$ must be even, and 
 $$\forall n \in \mathbf{N}: |s_n| < M.$$ 
 We can use a trick to turn the rational surgery in Equation \ref{Equation:Dehnsurgery} to an integral one. Based on the solved cases of the cabling conjecture, we can write:
 $$Y_{D_n} \# L(t_n,s_n) = S^3_{s_n/t_n}(K_n) \# L(t_n,s_n) = S^3_{s_nt_n}(T_{t_n,s_n}(K_n)),$$
 where $T_{t_n,s_n}(K_n)$ is the $(t_n,s_n)-$cable of the knot $K_n$. To simplify the notations we use $\widetilde{K}_n$ to denote the cable $T_{t_n,s_n}(K_n)$ in the rest of the argument.\\

Now, the trace $X_{s_nt_n}(\widetilde{K}_n)$ is a spin filling of $Y_{D_n} \# L(t_n,s_n)$ for a spin structure $\mathfrak{s}'_n$. The spin structure $\mathfrak{s}'_n$ comes from taking the connected sum of a spin structure $\mathfrak{s}_{n} \in \text{Spin}(Y_{D_n})$, with the unique spin structure of $L(t_n,s_n)$.\\

Now we can construct another spin filling of $(Y_{D_n} \# L(t_n,s_n),\mathfrak{s}'_n)$ as follows:
$$X'_{\mathfrak{s}_{n}} := X_{D_{n},\mathfrak{s}_{n}} \ \# \ V_{t_n,s_n}.$$
Here, $X_{D_{n},\mathfrak{s}_{n}} $ is the result of Kaplan algorithm described in Section \ref{Section:Kaplan}, and the 4-manifold $V_{t_n,s_n}$ is a spin filling of $L(t_n,s_n)$ with the property that 
$$b_2(V_{t_n,s_n}) \leq |s_n| \ \text{and} \ |\sigma(V_{t_n,s_n})| \leq |s_n|.$$

Finally we can consider the closed spin 4-maifold 
$$\widetilde{X}_{n} := X'_{\mathfrak{s}_{n}}  \ \cup \ - X_{s_nt_n}(\widetilde{K}_n).$$
Now, using the Mayer–Vietoris sequence and Novikov additivity, we compute the signature and second Betti number of $\widetilde{X}_{n}$ as follows:
$$\sigma(\widetilde{X}_{n}) = \sigma(X_{D_n,\mathfrak{s}_{n}})+\sigma(V_{t_n,s_n})-\sigma(X_{s_nt_n}(\widetilde{K}_n)), \ \text{and},$$
$$b_{2}(\widetilde{X}_{n}) = b_2(X_{D_n,\mathfrak{s}_{n}}) + b_2(V_{t_n,s_n}) + b_2(X_{s_nt_n}(\widetilde{K}_n)).$$
Based on what we discussed up to here we will have :
$$ \lim_{n \to \infty} \frac{|\sigma(\widetilde{X}_{n})|}{b_{2}(\widetilde{X}_{n})} = \lim_{n \to \infty} \frac{|\sigma(X_{D_n,\mathfrak{s}_n})|}{b_2(X_{D_n,\mathfrak{s}_n})}$$
And now we can use Proposition \ref{Proposition:compute} to conclude
\begin{equation}\label{Equation:finallimit}
\lim_{n \to \infty} \frac{|\sigma(X_{D_n,\mathfrak{s}_n})|}{b_2(X_{D_n,\mathfrak{s}_n})} = \lim_{n \to \infty} \frac{|\sigma(X_{D_n})- f_{s_{n}}|}{b_2(X_{D_n}) + |f_{\mathfrak{s}_n}|-2} =\lim_{n \to \infty} \frac{|f_{\mathfrak{s}_n}|}{|f_{\mathfrak{s}_n}|} = 1.
\end{equation}
The final conclusion follows from condition $(iii)$, as $|\sigma(X_{D_n})|$ and $b_2(X_{D_n})$ are bounded above by $|V_{D_n}|$.\\

For sufficently large $n$, the Equality \ref{Equation:finallimit} contradicts Furuta's 10/8 inequality. 
\end{proof}
It is not clear whether an example satisfying conditions $(i)-(iv)$ exists. We build explicit examples in Section \ref{Section:Examples}.

\section{Examples}\label{Section:Examples}
Proposition \ref{Proposition:genex1} gives examples of a family $D_n$ which satisfy conditions of Proposition \ref{Proposition:strategy}. To prepare for this, we begin by recalling the definition of the Laplacian matrix of a weighted and signed multigraph.
\begin{defi}
    Let $D$ be a weighted and signed multigraph with vertex set $V_D=\{v_1,\cdots,v_n\}$. The Laplacian matrix $A^D \in \mathbb{Z}^{n \times n}$ is defined by:
    \begin{equation}
    \begin{cases}
      A^D_{ii} = w_{D}(v_i) \ \text{for} \  1 \leq i \leq n\\
      A^D_{ij}=A^D_{ji}=\mu_{D}(E(v_i,v_j)) \ \text{for} \  1 \leq i < j \leq n
    \end{cases}       
\end{equation}
\end{defi}

\begin{prop}\label{Proposition:genex1}
    Let $D$ be a weighted and signed plane multigraph satisfying the following properties:
     \begin{enumerate}
     \item $\text{det}(A^D) \neq 0$ and $\text{det}(A^D) = 0 \ \text{(mod 2)}$ 
     \item $\exists v \in V_D $ such that $v \in V_S$ for any characteristic subgraph $S$ of $D$. 
     \item There exist two different vertices $v' , v'' \in V_D$ such that: 
     $$\forall x \in V_D \setminus \{v,v',v''\} \ : \ \mu_{D}(E(x,v')) = \mu_{D}(E(x,v'')) \ \text{and}$$
     $$\mu_{D}(E(v',v'')) = w_{D}(v')=w_{D}(v'').$$
     \end{enumerate}
     Define $D_n$ to be the same weighted and signed plane multigraph with only change being that we set 
     \begin{equation}\label{weighinc}
        w_{D_n}(v)=w_{D}(v)-2n.
     \end{equation}
Then the family $D_n$ satisfy the conditions of Proposition \ref{Proposition:strategy}.
\end{prop}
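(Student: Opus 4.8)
The plan is to verify conditions (i)--(iv) of Proposition \ref{Proposition:strategy} one at a time, with everything flowing from a single observation: passing from $D$ to $D_n$ alters only the diagonal weight $w(v)$, and by the \emph{even} amount $-2n$, so that the linking matrices are related by $A^{D_n} = A^D - 2n\,E_{vv}$, where $E_{vv}$ is the matrix with a single $1$ in the $(v,v)$ slot. In particular $A^{D_n} \equiv A^D \pmod 2$ on every entry, including the diagonal. Since a sublink of a chainmail link is characteristic exactly when its indicator vector $x$ solves $A x \equiv \mathrm{diag}(A) \pmod 2$, the set of characteristic subgraphs of $D_n$ is literally the same finite set as that of $D$ for every $n$; this is what lets me apply hypotheses (2) and (3) uniformly in $n$. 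Condition (iii) is then immediate, since $D_n$ and $D$ share a vertex set and $|V_{D_n}| = |V_D|$ is constant.

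For (i) and (ii) I would use $|H_1(Y_{D_n})| = |\det A^{D_n}|$, the order of the first homology of a Dehn surgery being the absolute value of the determinant of its linking matrix. Expanding by multilinearity in the row indexed by $v$ (whose only change from $A^D$ is the added $-2n\,\mathbf{e}_v^{T}$) and then cofactor-expanding the resulting term along that row gives
\[
\det A^{D_n} = \det A^D - 2n\,\det\!\big(A^D[\hat v]\big),
\]
where $A^D[\hat v]$ is the minor obtained by deleting the row and column of $v$. The crux is to show $\det(A^D[\hat v]) = 0$, and this is precisely where hypothesis (3) is used. Writing $a := \mu_D(E(v',v'')) = w_D(v') = w_D(v'')$, I claim the rows of $A^D[\hat v]$ indexed by $v'$ and $v''$ coincide: their diagonal and mutual entries all equal $a$, while for every remaining column $x$ the entries are $\mu_D(E(v',x))$ and $\mu_D(E(v'',x))$, which agree by the first clause of (3); crucially, the column of $v$ has been deleted, so $v$ contributes no discrepancy. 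Two equal rows force $\det(A^D[\hat v]) = 0$, whence $\det A^{D_n} = \det A^D$ for all $n$. Then (i) and (ii) both follow from hypothesis (1): $|\det A^D|$ is a fixed, even, positive integer, so I may take $M := |\det A^D| + 1$.

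For (iv), fix any sequence $\mathfrak{s}_n \in \mathrm{Spin}(Y_{D_n})$. By the identification above, each $\mathfrak{s}_n$ corresponds to a characteristic subgraph $S_n$ drawn from the finite list of characteristic subgraphs of $D$, and by hypothesis (2) we have $v \in V(S_n)$ for every $n$. Comparing the formula for $f_{\mathfrak{s}_n}$ from Proposition \ref{Proposition:compute} evaluated with the weights of $D_n$ versus those of $D$, the edge term is unchanged and the only differing weight is $w_{D_n}(v) = w_D(v) - 2n$, which genuinely appears because $v \in V(S_n)$. Hence $f_{\mathfrak{s}_n} = f^{D}_{S_n} - 2n$, where $f^{D}_{S_n}$ is the corresponding quantity for $D$. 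As $S_n$ ranges over a finite set, the numbers $f^{D}_{S_n}$ are bounded by some constant $C_0$, so $|f_{\mathfrak{s}_n}| \ge 2n - C_0 \to +\infty$, giving (iv).

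I expect the main obstacle to be the determinant vanishing used for (i) and (ii): one must recognize that hypothesis (3) is engineered exactly to produce two identical rows in the $v$-deleted minor, thereby pinning $|H_1(Y_{D_n})|$ to the constant value $|\det A^D|$ rather than letting it grow linearly in $n$. Once the mod-$2$ invariance of the characteristic subgraphs is in hand, the remaining arguments are bookkeeping, with hypothesis (2) supplying the persistent $-2n$ in $f_{\mathfrak{s}_n}$ and hypothesis (1) supplying parity and nonvanishing.
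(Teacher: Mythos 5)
Your proof is correct and follows essentially the same route as the paper: the vanishing of the $v$-deleted minor via the two equal rows forced by hypothesis (3), the mod-2 invariance of the set of characteristic subgraphs, and hypothesis (2) guaranteeing that the $-2n$ shift actually enters $f_{\mathfrak{s}_n}$. If anything, your write-up is slightly more careful than the paper's, which has a sign slip in its final formula for $f_{\mathfrak{s}_n}$ and tacitly treats the characteristic subgraph as fixed rather than merely ranging over a finite set as you do.
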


\begin{proof}
Without loss of generality assume that $v,v',v''$ are respectively $v_1,v_2,v_3$.\\

Condition $(iii)$ obviously holds. To prove conditions $(i)$ and $(ii)$, we need to compute the $H_1(Y_{D_n})$ by applying the Mayer–Vietoris theorem (on the surgery diagram $L_{D_n}$). This gives us that 
$$|H_1(Y_{D_n})| = det(A^{D_n}).$$
Now note that the only difference between $A^{D_n}$ and $A^{D}$ is in the $(1,1)-$element where:
$$A_{11}^{D_n} = w_{D_n}(v_1) = w_{D}(v_1)-2n=A_{11}^{D} - 2n.$$
It turns out that this difference doesn't affect the determinant and we have $\text{det}(A^{D_n})=\text{det}(A^{D})$. This equality comes from the fact that the second and third rows of $A^{D_n}$ can only differ in their first entry due the third property. As a result, the determinant of $(1,1)-$minor of $A^{D_n}$ vanishes and hence the $(1,1)-$element's values doesn't appear in the expansion of determinant. This gives us that 
$$\forall n \in \mathbf{N}: |H_1(Y_{D_n})| = \text{det}(A^{D}),$$
which means conditions $(i)$ and $(ii)$ will be satisfied.\\

To prove Condition $(iv)$, first note that as $n$ changes, the characteristic subgraphs of $D_n$ remain unchanged, since the parity of the weights remain unchanged.\\

Consider any choice of $(\mathfrak{s}_{n})_{n \in \mathbf{N}}$ such that $\mathfrak{s}_{n} \in \text{Spin}(Y_{D_n})$. Let $S_n$ be the characteristic subgraph of $D_n$ associated to $\mathfrak{s}_{n}$. Due to the second property and the argument of the last paragraph, we have $v_1 \in V_{S_n}$. Based on the definition of $f_{\mathfrak{s}_{n}}$ and $D_n$ we will have 
$$f_{\mathfrak{s}_{n}} = f_{\mathfrak{s}_{0}} - 2(w_{D_n}(v) - w_{D_0}(v)) = f_{\mathfrak{s}_{0}}+2n.$$
This establishes condition $(iv)$. 
\end{proof}
\begin{exem}\label{example1}
    A very simple example of Proposition \ref{Proposition:genex1} is the following choice for $D$.\\
    
\begin{figure}[h]
\centering
\def\svgscale{0.4}
\centerline{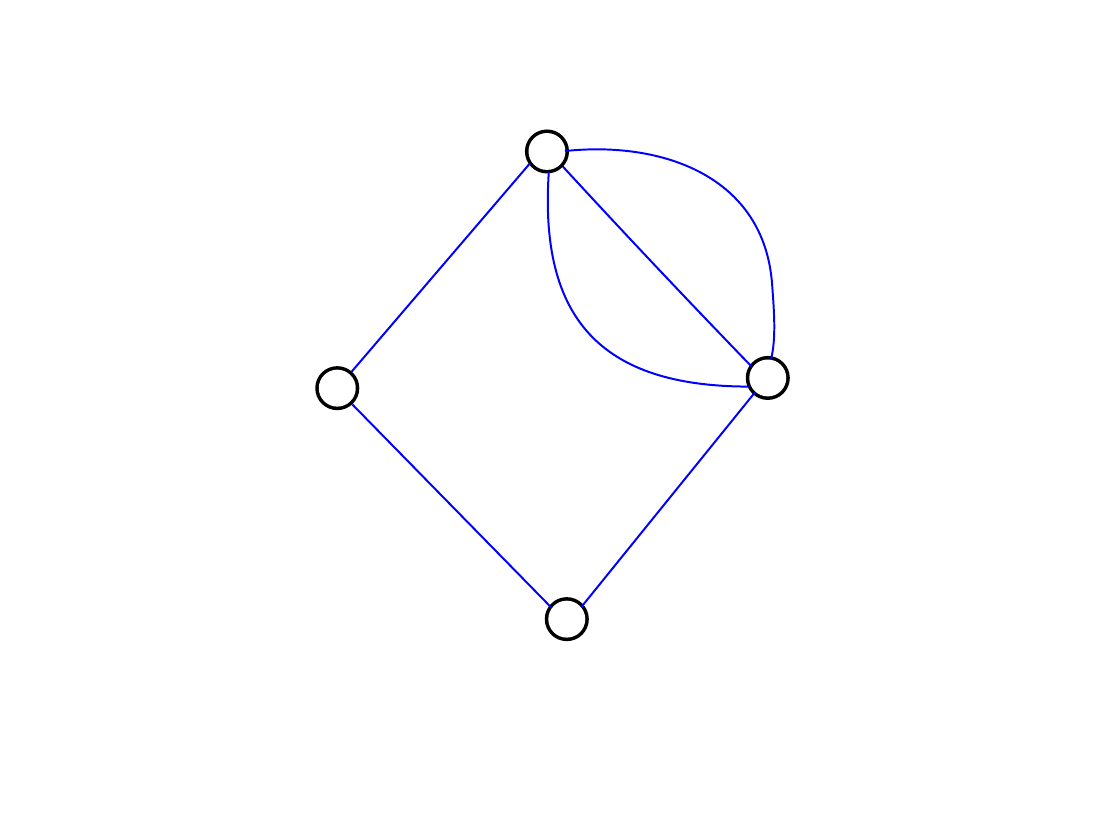}
\caption{Example of Proposition \ref{Proposition:genex1}. The weights are written in red. All of the edges has $+$ sign.}\label{fig:example1}
\end{figure}
The Laplacian matrix $A^D$ will be as follows:
$$A^D=\begin{bmatrix}
-5 & 1 & 3 & 0 \\
1 & 0 & 0 & 1 \\
3 & 0 & 0 & 1 \\
0 & 1 & 1 & -4 
\end{bmatrix}$$
As a result, $\text{det}(A^D)=4$ and hence the first assumption holds. The only characteristic subgraphs of $D$ are induced by $\{v_1,v_4\}$ and $\{v_1,v_2,v_3,v_4\}$ which means that the second assumption also holds by letting $v$ to be $v_1$. The third assumption also clearly holds by letting $v',v''$ to respectively be $v_2,v_3$.
\end{exem}
\begin{rema}\label{Remark:weight}
    As mentioned in Section \ref{Section:Intro}, the Dehn surgery number can sometimes be bounded using algebraic invariants. As a result of the Seifert-Van Kampen theorem, the weight of the fundamnetal group of a 3-manifold with a Dehn surgery number of one is equal to one.\\
    
    In general, there is no guarantee that the examples generated in Proposition \ref{Proposition:genex1} will have a fundamental group of weight one. However, some do exhibit this property as we see in Proposition \ref{Prop:weight}
    \end{rema}
    
\begin{prop}\label{Prop:weight}
    Let $\{D_{n}\}_{n \in \mathbb{N}}$ be the family of weighted, signed plane graphs constructed from the graph $D$ described in Example \ref{example1}, using the procedure in Proposition \ref{Proposition:genex1}. Then the fundamental group $\pi_1(Y_{D_n})$ has weight one for all $n \in \mathbf{N}$.
\end{prop}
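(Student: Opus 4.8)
The plan is to exhibit a single element of $\pi_1(Y_{D_n})$ whose normal closure is the whole group, since weight one means exactly this. The candidate is dictated by homology: computing the Smith normal form of $A^{D_n}$ (using, as in the proof of Proposition \ref{Proposition:genex1}, that rows $2$ and $3$ differ only in their first entry) shows $H_1(Y_{D_n})\cong \mathbf{Z}/4\mathbf{Z}$ for every $n$, generated by the class of the meridian $\mu_3$ of the component $L_3$. Since any normal generator must in particular generate $H_1$, and the meridians $\mu_1,\mu_4$ have order $2$ in $H_1$ while $\mu_3$ (and $\mu_2$) generate, I would take $g=\mu_3$ and aim to prove $\langle\langle \mu_3\rangle\rangle=\pi_1(Y_{D_n})$.

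Next I would write down the surgery presentation coming from the chainmail diagram $L_{D_n}$: the generators are the meridians $\mu_1,\dots,\mu_4$, and each component contributes a relation $\mu_i^{\,w_{D_n}(v_i)}\ell_i=1$, where $\ell_i$ is the $0$-framed longitude, a word in the meridians whose abelianization is $\sum_j \mu_{D}(E(v_i,v_j))\,\mu_j$. The relevant data are the framings $w(v_2)=w(v_3)=0$, $w(v_4)=-4$ and $w_{D_n}(v_1)=-5-2n$, together with the linking numbers read off the edges of $D$ in Example \ref{example1}.

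To prove normal generation I would pass to $Q:=\pi_1(Y_{D_n})/\langle\langle \mu_3\rangle\rangle$ and show $Q=1$. Killing $\mu_3$ amounts to the trivial Dehn filling of the unknot $L_3$, so $Q$ is the fundamental group of the surgery on the sublink $L_1\cup L_2\cup L_4$ (a chain with $\lk(L_1,L_2)=\lk(L_2,L_4)=1$ and $\lk(L_1,L_4)=0$), subject to the extra relation $\ell_3=1$ coming from the $0$-framing of $L_3$. In this quotient $\mu_2$ is central, and the $0$-framed relation $\ell_2=1$ identifies $\mu_4$ with a conjugate of $\mu_1^{-1}$, collapsing the group to a cyclic group generated by $\mu_1$. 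Once the group is cyclic the remaining relations reduce to their abelianizations, i.e.\ to linking-number identities: $\ell_3=1$ gives $\mu_1^2=1$, while the framings on $L_1$ and $L_4$ give $\mu_1^{5+2n}=\mu_1^{-4}$, that is $\mu_1^{9+2n}=1$. As $9+2n$ is odd, $\gcd(9+2n,2)=1$ forces $\mu_1=1$, whence $\mu_2=\mu_4=1$ and $Q=1$. This gives $\langle\langle\mu_3\rangle\rangle=\pi_1(Y_{D_n})$, so the weight is one for all $n$.

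The main obstacle is the step where the group is shown to become cyclic, because the longitudes $\ell_i$ are genuinely non-abelian words, so a priori the surgery relations only control the meridians up to conjugacy. The point I would need to pin down carefully is that the single $0$-framed relation on the central component $L_2$ already forces the free part $\langle \mu_1,\mu_4\rangle$ to collapse to cyclic; after that, every remaining relation may be replaced by its abelianization and the argument becomes the elementary computation above, where the oddness of $9+2n$—which is precisely what the $-2n$ shift in $w_{D_n}(v_1)$ preserves—delivers the triviality of $Q$. Verifying the collapse amounts to reading the longitude words off the diagram, or to identifying $\pi_1$ of the chain-link complement with the right-angled Artin group $\langle \mu_1,\mu_2,\mu_4\mid [\mu_1,\mu_2],[\mu_2,\mu_4]\rangle$, and this is where I would spend the most care.
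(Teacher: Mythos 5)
Your argument is correct and the arithmetic checks out ($H_1(Y_{D_n})\cong\mathbf{Z}/4$ generated by $\mu_3$, with $\mu_1=2\mu_3$ of order two; and in the quotient the relations reduce to $\mu_1^2=1$ and $\mu_1^{2n+9}=1$), but it takes a genuinely different route from the paper. The paper first realizes $Y_{D_n}$ as the branched double cover $\Sigma(S^3,K_n)$ by completing $D_n$ to a white Tait graph $W_n$ and applying the median construction, then uses Greene's method to extract an explicit four-generator presentation of $\pi_1(Y_{D_n})$ indexed by the vertices, kills the generator $x_3$ corresponding to $v_3$, and finishes with the same $\gcd(2,\text{odd})=1$ trick you use. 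You instead work directly with the surgery presentation of the chainmail diagram, kill the meridian $\mu_3$, and exploit the fact that the residual sublink $L_1\cup L_2\cup L_4$ is the open $3$-chain, whose complement has fundamental group the right-angled Artin group $F(\mu_1,\mu_4)\times\langle\mu_2\rangle$. Your approach is more self-contained (no branched cover, no Greene presentation), while the paper's buys the knot $K_n$ needed for Theorem \ref{Theorem:Mainexample} and a presentation in which every step is a literal rewriting. The one point you rightly flag is the only delicate one: the bare statement that $\ell_2=1$ makes $\mu_4$ a conjugate of $\mu_1^{-1}$ does not by itself collapse $F(\mu_1,\mu_4)$ to a cyclic group (a relator of the form $\mu_1 w\mu_4 w^{-1}$ with $w$ an arbitrary word need not do so). It does work here because for the standard open chain the $0$-framed longitude of the middle component bounds a twice-punctured disk meeting $L_1$ and $L_4$ standardly, so with suitable meridian choices $\ell_2$ is literally $\mu_1\mu_4$, and then $(F(\mu_1,\mu_4)\times\langle\mu_2\rangle)/\langle\langle\mu_1\mu_4\rangle\rangle\cong\mathbf{Z}^2$ is abelian, after which replacing the remaining relators by their abelianizations is legitimate. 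Spelling out that one Wirtinger computation would make your proof complete.
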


\begin{proof}
        First, we show that $L_{D_n}$ are all Tait surgery diagrams. We do this by adding a root vertex and then connecting it to $V_{D_n}$ such that we end up with a graph $W_n$ with the property that
    $$w_{W_n}(v) := -\sum_{\substack{e \in E_{W_n} \\ e \ \text{adjacent to} \ v}} \mu_{W_n}(e).$$
    The graph $W_n$ can be seen in Figure \ref{fig:Wn}. Now, one can construct a knot $K_n$ by applying median construction on $W_n$, such that $W_n$ is the white Tait graph of $K_n$. This can be seen in Figure \ref{Figure:Myexample}.\\
    
    Hence $D_n$ will be the reduced white Tait graph of $K_n$, and based on Theorem \ref{Theorem:OS}, we have:  
    $$\Sigma(S^3,K_n) \simeq Y_{D_n}.$$
\begin{figure}[h]
\centering
\def\svgscale{0.4}
\centerline{
\begingroup%
  \makeatletter%
  \providecommand\color[2][]{%
    \errmessage{(Inkscape) Color is used for the text in Inkscape, but the package 'color.sty' is not loaded}%
    \renewcommand\color[2][]{}%
  }%
  \providecommand\transparent[1]{%
    \errmessage{(Inkscape) Transparency is used (non-zero) for the text in Inkscape, but the package 'transparent.sty' is not loaded}%
    \renewcommand\transparent[1]{}%
  }%
  \providecommand\rotatebox[2]{#2}%
  \newcommand*\fsize{\dimexpr\f@size pt\relax}%
  \newcommand*\lineheight[1]{\fontsize{\fsize}{#1\fsize}\selectfont}%
  \ifx\svgwidth\undefined%
    \setlength{\unitlength}{502.70037794bp}%
    \ifx\svgscale\undefined%
      \relax%
    \else%
      \setlength{\unitlength}{\unitlength * \real{\svgscale}}%
    \fi%
  \else%
    \setlength{\unitlength}{\svgwidth}%
  \fi%
  \global\let\svgwidth\undefined%
  \global\let\svgscale\undefined%
  \makeatother%
  \begin{picture}(1,0.97735164)%
    \lineheight{1}%
    \setlength\tabcolsep{0pt}%
    \put(0.11623346,0.26073899){\color[rgb]{0,0,0}\rotatebox{18.13430321}{\makebox(0,0)[lt]{\lineheight{1.25}\smash{\begin{tabular}[t]{l}$\color{blue}2n-1$\end{tabular}}}}}%
    \put(0,0){\includegraphics[width=\unitlength,page=1]{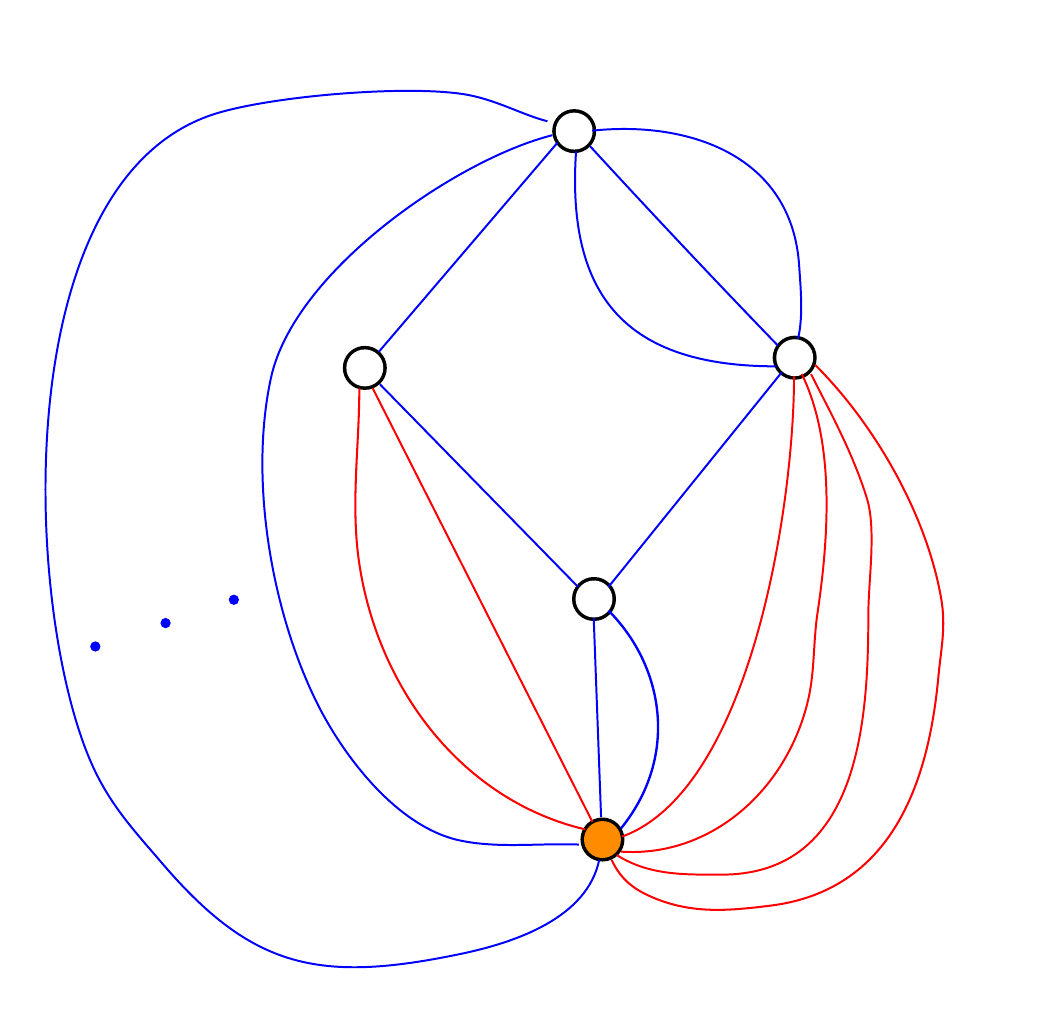}}%
    \put(0.53741905,0.88634751){\color[rgb]{0,0,0}\makebox(0,0)[lt]{\lineheight{1.25}\smash{\begin{tabular}[t]{l}$\color{black}v_1$\end{tabular}}}}%
    \put(0.3769978,0.62135233){\color[rgb]{0,0,0}\makebox(0,0)[lt]{\lineheight{1.25}\smash{\begin{tabular}[t]{l}$\color{black}v_2$\end{tabular}}}}%
    \put(0.78840333,0.63623198){\color[rgb]{0,0,0}\makebox(0,0)[lt]{\lineheight{1.25}\smash{\begin{tabular}[t]{l}$\color{black}v_3$\end{tabular}}}}%
    \put(0.54106077,0.44457247){\color[rgb]{0,0,0}\makebox(0,0)[lt]{\lineheight{1.25}\smash{\begin{tabular}[t]{l}$\color{black}v_4$\end{tabular}}}}%
    \put(0,0){\includegraphics[width=\unitlength,page=2]{PicirriloLiu.pdf}}%
  \end{picture}%
\endgroup%
}
\caption{Construction of $W_n$ for Example \ref{example1}. The edges with color blue (resp.~red) has sign $+$ (resp.~$-$). Applying the median constructions to $W_n$ gives us $K_n$ as seen in Figure \ref{Figure:Myexample}.}\label{fig:Wn}
\end{figure}
    
    Now we can use a method proposed by Greene \cite{Greene2008AST} to compute a presentation of $\pi_1(Y_{D_n})$ (also see Marengon\cite{Marengon} where the method is used for the same purpose). Applying this in our case gives us the following.
    $$\pi_1(Y_{D_n})=\langle x_1,\cdots,x_4 | r_1,\cdots,r_4 \rangle $$
    $$r_1=(x_1)^{2n+1}(x_2^{-1}x_1)(x_3^{-1}x_1)^{3}$$
    $$r_2=(x_1^{-1}x_2)(x_2)^{-2}(x_4^{-1}x_2)$$
    $$r_3=(x_1^{-1}x_3)^{3}(x_4^{-1}x_3)(x_3)^{-4}$$
    $$r_4=(x_3^{-1}x_4)(x_2^{-1}x_4)(x_4)^{2}$$
    We then can show that $\pi_1(Y_{D_n}) / \langle \langle x_3 \rangle \rangle$ is the trivial group.
    Using the relation $x_3=e$ to simplify $r_1$ and $r_3$, we will have $x_2=x_1^{2n+5}$ and $x_4=x_1^{-3}$. Now using these two new relation, we can turn $r_2$ and $r_4$ to relations only containing $x_1$. The result will be the following two relations: 
    $$x_1^{2}=e,$$
    $$x_1^{-2n-17}=e.$$
    Since $\text{gcd}(2,-2n-17)=1$, we will have that $x_1=e$ which in turn means $x_2=x_4=e$. This gives us the desired result. 
\end{proof}

\bibliography{bibtemplate}

\providecommand{\bysame}{\leavevmode ---\ }
\providecommand{\og}{``}
\providecommand{\fg}{''}
\providecommand{\smfandname}{et}
\providecommand{\smfedsname}{\'eds.}
\providecommand{\smfedname}{\'ed.}
\providecommand{\smfmastersthesisname}{M\'emoire}
\providecommand{\smfphdthesisname}{Th\`ese}
\begin{thebibliography}{HKL16}

\bibitem[Auc97]{auckly1997surgery}
{\scshape D.~Auckly} -- {\og Surgery numbers of 3-manifolds: a hyperbolic example\fg}, \emph{Geometric topology (Athens, GA, 1993)} \textbf{2} (1997), p.~21--34.

\bibitem[Aza23]{azarpendar2023negative}
{\scshape S.~Azarpendar} -- {\og Negative definite spin filling and branched double covers\fg}, 2023.

\bibitem[Gre08]{Greene2008AST}
{\scshape J.~E. Greene} -- {\og A spanning tree model for the heegaard floer homology of a branched double‐cover\fg}, \emph{Journal of Topology} \textbf{6} (2008).

\bibitem[HKL16]{hom2016surgery}
{\scshape J.~Hom, {\c{C}}.~Karakurt {\normalfont \smfandname} T.~Lidman} -- {\og Surgery obstructions and heegaard floer homology\fg}, \emph{Geometry \& Topology} \textbf{20} (2016), no.~4, p.~2219--2251.

\bibitem[HL18]{hom2018note}
{\scshape J.~Hom {\normalfont \smfandname} T.~Lidman} -- {\og A note on surgery obstructions and hyperbolic integer homology spheres\fg}, \emph{Proceedings of the American Mathematical Society} \textbf{146} (2018), no.~3, p.~1363--1365.

\bibitem[LB90]{LinesBoyer+1990+181+220}
{\scshape D.~Lines {\normalfont \smfandname} S.~Boyer} -- {\og Surgery formulae for casson's invariant and extensions to homology lens spaces.\fg}, \emph{Journal für die reine und angewandte Mathematik (Crelles Journal)} \textbf{1990} (1990), no.~405, p.~181--220.

\bibitem[LP24]{liu2024bounding}
{\scshape B.~Liu {\normalfont \smfandname} L.~Piccirillo} -- {\og Bounding the dehn surgery number by 10/8\fg}, 2024.

\bibitem[Mar16]{Marengon}
{\scshape M.~Marengon} -- {\og On d-invariants and generalized kanenobu knots\fg}, \emph{Journal of Knot Theory and Its Ramifications} \textbf{25} (2016), no.~08, p.~1650048.

\bibitem[McC14]{McCoy2014NonintegerSA}
{\scshape D.~McCoy} -- {\og Non‐integer surgery and branched double covers of alternating knots\fg}, \emph{Journal of the London Mathematical Society} \textbf{92} (2014).

\bibitem[OS05]{SObranch}
{\scshape P.~Ozsváth {\normalfont \smfandname} Z.~Szabó} -- {\og On the {H}eegaard {F}loer homology of branched double-covers\fg}, \emph{Advances in Mathematics} \textbf{194} (2005), no.~1, p.~1--33.

\bibitem[Owe22]{owens2022knots}
{\scshape B.~Owens} -- {\og Knots and 4-manifolds\fg}, 2022.

\bibitem[Pol14]{Polyak}
{\scshape M.~Polyak} -- {\og {From 3-manifolds to planar graphs and cycle-rooted trees}\fg}, 2014.

\bibitem[SZ22]{sivek2022surgery}
{\scshape S.~Sivek {\normalfont \smfandname} R.~Zentner} -- {\og Surgery obstructions and character varieties\fg}, \emph{Transactions of the American Mathematical Society} \textbf{375} (2022), no.~5, p.~3351--3380.

\end{thebibliography}
\bibliographystyle{smfalpha}
\end{document}